\title{Near-optimal convergence of the full orthogonalization method}
\author{Tyler Chen\footnotemark[2] \and G\'erard Meurant\footnotemark[3]}
\shorttitle{Near-optimal convergence of the full orthogonalization method} 
\crefname{section}{Section}{Sections}
\crefname{subsection}{Subsection}{Subsections}
\renewcommand{\d}{\mathrm{d}}
\renewcommand{\vec}{\mathbf}
\newcommand{\T}{\mathsf{T}}
\def\@Stamppage{}
\def\@Stamptitle{}
\def\@Stamppagetitle{}
\begin{document}

\maketitle

\renewcommand{\thefootnote}{\fnsymbol{footnote}}

\footnotetext[2]{New York University, New York, USA.}
\footnotetext[3]{Paris, France.}

\begin{abstract}
We establish a near-optimality guarantee for the full orthogonalization method (FOM), showing that the \emph{overall} convergence of FOM is nearly as good as GMRES. 
In particular, we prove that at every iteration $k$, there exists an iteration $j\leq k$ for which the FOM residual norm at iteration $j$ is no more than $\sqrt{k+1}$ times larger than the GMRES residual norm at iteration $k$.
This bound is sharp, and it has implications for algorithms for approximating the action of a matrix function on a vector.\end{abstract}

\begin{keywords}
Full Orthogonalization Method, GMRES
\end{keywords}

\begin{AMS}
65F10
\end{AMS}

\section{Introduction}
The \emph{full orthogonalization method} (FOM) \cite{saad_81} and the \emph{generalized minimal residual method} (GMRES) \cite{saad_schultz_86} are two Krylov subspace methods used for solving a real or complex non-symmetric\footnote{If $\vec{A}$ is symmetric GMRES is mathematically equivalent to MINRES \cite{paige_saunders_75}, and if $\vec{A}$ is symmetric positive definite FOM is mathematically equivalent to conjugate gradient \cite{hestenes_stiefel_52}. While this is relevant for efficient implementation, it does not impact the exact arithmetic theory in this note.} linear system of equations
\begin{equation}
    \label{eqn:Axb}
\vec{A}\vec{x} = \vec{b}.
\end{equation}
Assuming an initial guess $\vec{x}_0 = \vec{0}$, both FOM and GMRES produce iterates from the Krylov subspace
\begin{equation}
\label{eqn:krylov_subspace}
    \mathcal{K}_k(\vec{A},\vec{b}) 
    := \operatorname{span}\{\vec{b}, \vec{A}\vec{b}, \ldots, \vec{A}^{k-1}\vec{b}\}.
\end{equation}
but according to slightly different formulas.
The FOM iterate is of particular interest, because it is closely related to the Arnoldi method for matrix-function approximation for approximating $f(\vec{A})\vec{b}$, the action of a matrix function on a vector \cite{druskin_knizhnerman_89,gallopoulos_saad_92,higham_08}.
We expand on this connection in \cref{sec:fAb}

Denote by $\vec{Q}_k = [\vec{q}_1, \ldots, \vec{q}_k]$ the orthonormal basis for the Krylov subspace $\mathcal{K}_k(\vec{A},\vec{b})$ produced by the Arnoldi algorithm \cite{arnoldi_51}.
Define also the $(k+1)\times k$ upper-Hessenberg matrix $\vec{H}_{k+1,k}$ of coefficients produced by the Arnoldi algorithm, and recall the Arnoldi recurrence relation
\begin{equation}
    \vec{A}\vec{Q}_k = \vec{Q}_{k+1}\vec{H}_{k+1,k}.    
\end{equation}
The FOM and GMRES iterates (with zero initial guess $\vec{x}_0^{\mathrm{F}}  = \vec{x}_0^{\mathrm{G}}  = \vec{0}$) are respectively defined as 
\begin{equation}\label{eqn:iterates}
    \vec{x}_k^{\mathrm{F}} 
    := \|\vec{b}\|_2 \vec{Q}_k (\vec{H}_k)^{-1} \vec{e}_1
    ,\qquad
    \vec{x}_k^{\mathrm{G}} 
    := \|\vec{b}\|_2 \vec{Q}_k (\vec{H}_{k+1,k})^\dagger \vec{e}_1,
\end{equation}
where $\vec{H}_k$ is $\vec{H}_{k+1,k}$ with the last row deleted and $\dagger$ indicates the pseudoinverse, and $\vec{e}_1 = [1,0,\ldots, 0]^\T$ is the first canonical basis vector \cite{meurant_tebbens_20}. 

Define the FOM and GMRES residual vectors
\begin{equation}   
    \vec{r}_k^{\mathrm{F}} := \vec{b} - \vec{A} \vec{x}_k^{\mathrm{F}}
    ,\qquad
    \vec{r}_k^{\mathrm{G}} := \vec{b} - \vec{A} \vec{x}_k^{\mathrm{G}}.
\end{equation}
The norms of these residuals can be used as measure of how well the iterates $\vec{x}_k^{\mathrm{F}}$ and $\vec{x}_k^{\mathrm{G}}$ solve the linear system \cref{eqn:Axb}, and understanding their relationship is the aim of this paper.

\begin{figure}
    \centering
    \includegraphics[scale=.48]{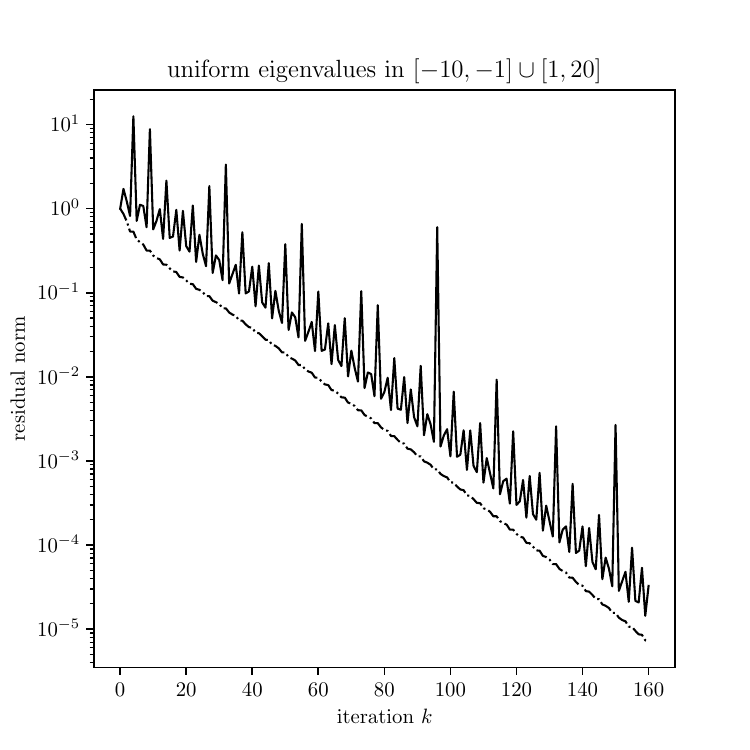}
    \includegraphics[scale=.48]{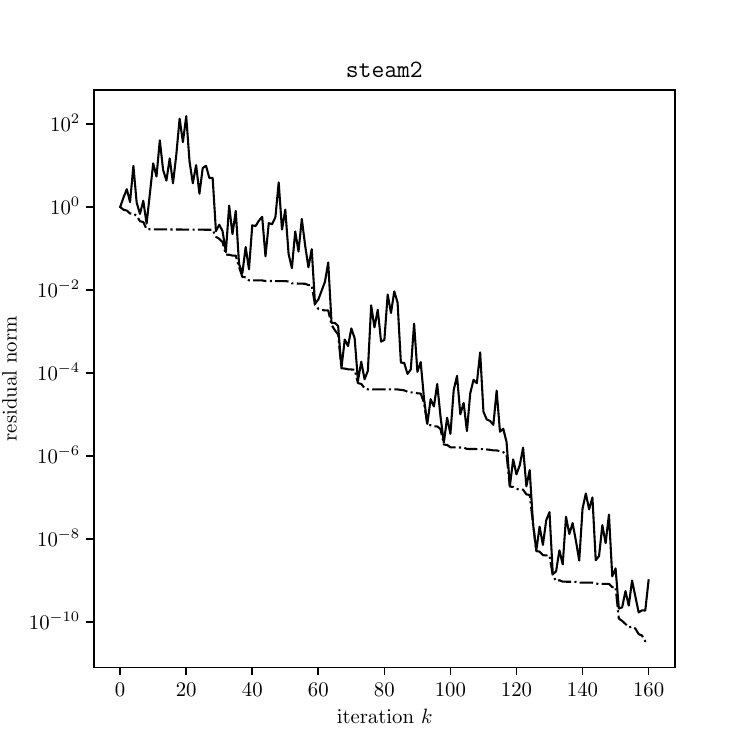}
    \caption{Residual norms for FOM $\|\vec{r}_k^{\mathrm{F}}\|_2$ (solid) and GMRES $\|\vec{r}_k^{\mathrm{G}}\|_2$ (dash-dot) for a symmetric matrix with eigenvalues in $[-10,-1]\cup[1,20]$ and the \texttt{steam2} matrix.
    All curves are normalized by $\|\vec{b}\|_2$.
    While the residual norms for FOM jump up and down, they exhibit a generally downward trend which mirrors the convergence of the GMRES residual norms.}
    \label{fig:motivation}
\end{figure}

It is well-known that the GMRES iterates satisfy a residual optimality guarantee:
\begin{equation}\label{eqn:GMRES_opt}
    \vec{x}_k^{\mathrm{G}} 
    = \operatornamewithlimits{argmin}_{\vec{x}\in\mathcal{K}_k(\vec{A},\vec{b})} \| \vec{b} - \vec{A} \vec{x} \|_{2}.
\end{equation}
Hence, the GMRES residual norms are non-increasing and are optimal among Krylov subspace methods.
This optimality guarantee leads to well-known results on the rate of convergence of the residual norms in terms of quantities such as the condition number of $\vec{A}$ \cite{saad_03}.
On the other hand, the FOM residual norms often appear oscillatory, with large jumps.
In fact, it is easy to construct examples for which $\|\vec{r}_k^{\mathrm{F}}\|_2/\|\vec{r}_0^{\mathrm{F}}\|_2$ can be arbitrarily large \cite{meurant_tebbens_20}!

Typical examples of the convergence behavior of FOM and GMRES are illustrated in \cref{fig:motivation}.
Here we consider a symmetric matrix with 500 eigenvalues equally spaced in $[-10,-1]\cup[1,20]$ and the \texttt{steam2} matrix from the Matrix Market \cite{boisvert_pozo_remington_barrett_dongarra_97}. 
In both cases we choose $\vec{b}$ as the all ones vector.
We observe that while GMRES exhibits well-behaved non-increasing residual norms, the residual norms for FOM are highly oscillatory. In fact, at some iterations, the FOM residual norms are orders of magnitude larger than those of GMRES. 
Remarkably, however, FOM exhibits an ``overall downward trend'' mirroring the convergence of GMRES.

\section{Convergence bounds}

\begin{figure}
    \centering
    \includegraphics[scale=.48
]{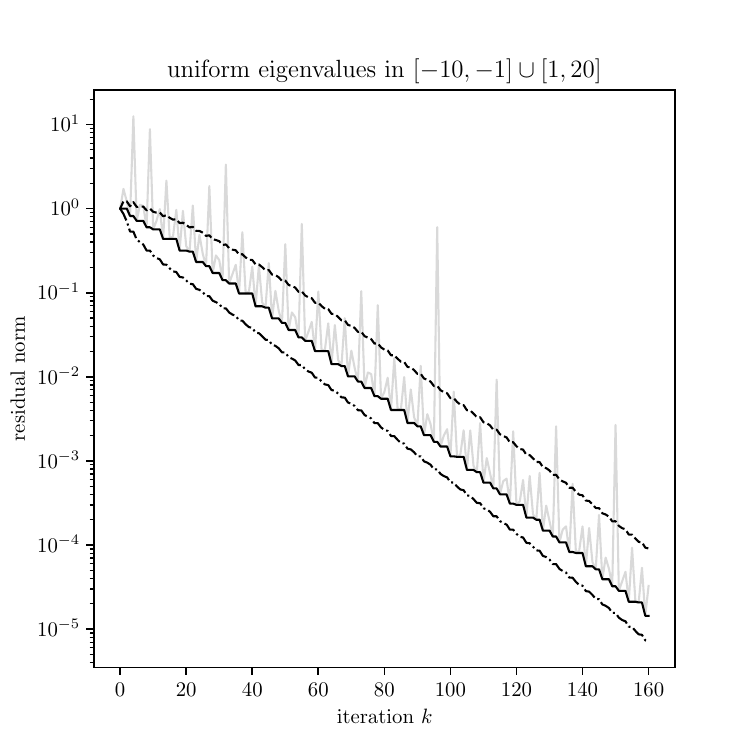}
    \hfil
    \includegraphics[scale=.48
]{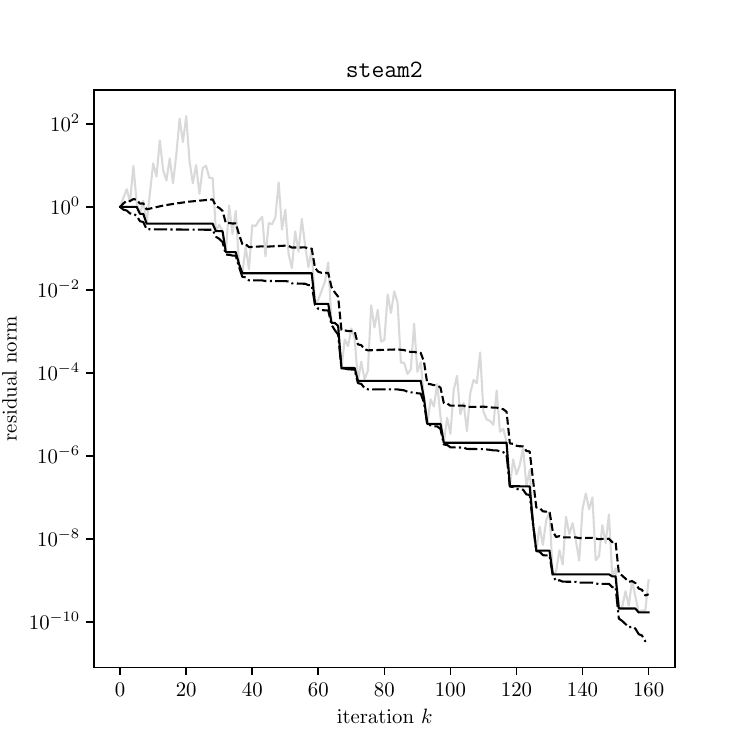}
    \caption{Best FOM residual so far $\min_{j\leq k} \|\vec{r}_j^{\mathrm{F}}\|_2$ (solid), bound of \cref{thm:main} (dashed), and residual norms for FOM $\|\vec{r}_k^{\mathrm{F}}\|_2$ (solid grey) and GMRES $\|\vec{r}_k^{\mathrm{G}}\|_2$ (dash-dot).
    All curves are normalized by $\|\vec{b}\|_2$.
    While FOM exhibits oscillatory convergence, \cref{thm:main} ensures that the best FOM residual norm seen so far matches closely with the GMRES residual norm.}
    \label{fig:result}
\end{figure}
The purpose of this note is to establish the following bound for the FOM residual norms in terms of the optimal GMRES residual norms:
\begin{theorem}\label{thm:main}
For every $k\geq 1$,
\begin{equation*}
    \min_{0\leq j\leq k} \|\vec{r}_{j}^{\mathrm{F}}\|_2
    \leq \sqrt{k+1} \cdot \|\vec{r}_k^{\mathrm{G}}\|_2.
\end{equation*}
\end{theorem}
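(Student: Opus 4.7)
The plan is to reduce the statement to a classical telescoping identity connecting the FOM and GMRES residual norms, and then apply a simple ``harmonic-mean'' averaging argument.

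The starting point I would use is the well-known relation
\begin{equation*}
    \frac{1}{\|\vec{r}_k^{\mathrm{G}}\|_2^2} = \frac{1}{\|\vec{r}_{k-1}^{\mathrm{G}}\|_2^2} + \frac{1}{\|\vec{r}_k^{\mathrm{F}}\|_2^2},
\end{equation*}
which follows from the progressive QR factorization of the upper Hessenberg matrix $\vec{H}_{k+1,k}$ via Givens rotations. In that factorization $\|\vec{r}_k^{\mathrm{G}}\|_2 = |\sin\theta_k|\, \|\vec{r}_{k-1}^{\mathrm{G}}\|_2$ and $\|\vec{r}_k^{\mathrm{F}}\|_2 = \|\vec{r}_k^{\mathrm{G}}\|_2 / |\cos\theta_k|$, where $\theta_k$ is the angle of the $k$-th Givens rotation; the identity $\sin^2\theta_k + \cos^2\theta_k = 1$ then rearranges into the displayed equation. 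The case where $\vec{H}_k$ is singular corresponds to $\cos\theta_k = 0$ and $\|\vec{r}_k^{\mathrm{F}}\|_2 = \infty$, for which $1/\|\vec{r}_k^{\mathrm{F}}\|_2^2 = 0$ is the natural interpretation and the identity still holds.

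Unrolling this recursion from $k$ down to $0$, and using $\|\vec{r}_0^{\mathrm{F}}\|_2 = \|\vec{r}_0^{\mathrm{G}}\|_2 = \|\vec{b}\|_2$, telescopes to
\begin{equation*}
    \frac{1}{\|\vec{r}_k^{\mathrm{G}}\|_2^2} = \sum_{j=0}^{k} \frac{1}{\|\vec{r}_j^{\mathrm{F}}\|_2^2}.
\end{equation*}
Let $m := \min_{0 \leq j \leq k} \|\vec{r}_j^{\mathrm{F}}\|_2$. Each summand on the right is at most $1/m^2$, so the sum is at most $(k+1)/m^2$, yielding $m^2 \leq (k+1)\|\vec{r}_k^{\mathrm{G}}\|_2^2$, which is exactly the claimed bound after taking square roots.

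The main obstacle, if one wants a fully self-contained proof, is verifying the Givens/QR identity above cleanly without explicitly working in terms of rotation angles — in particular, handling the ``breakdown'' case where some intermediate $\vec{H}_j$ is singular. I would handle this by writing the reduced QR factorization $\vec{H}_{k+1,k} = \vec{Q}_k^{\mathrm{H}} \vec{R}_k$ using the global (not progressive) factorization, computing $\|\vec{r}_k^{\mathrm{G}}\|_2 = \|\vec{b}\|_2\, |\vec{e}_{k+1}^\T \vec{Q}_k^{\mathrm{H}} \vec{e}_1|$ and $\|\vec{r}_k^{\mathrm{F}}\|_2 = \|\vec{b}\|_2 \, |h_{k+1,k}|\, |\vec{e}_k^\T \vec{H}_k^{-1} \vec{e}_1|$ (which extends continuously to $+\infty$ at breakdown), and verifying the reciprocal sum identity by direct manipulation of these expressions. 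Everything after the identity is a one-line pigeonhole argument.
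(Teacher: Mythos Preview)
Your proposal is correct and takes essentially the same approach as the paper: both arrive at the key identity $1/\|\vec{r}_k^{\mathrm{G}}\|_2^2 = \sum_{j=0}^{k} 1/\|\vec{r}_j^{\mathrm{F}}\|_2^2$ and then bound each summand by $1/m^2$. The only cosmetic difference is how the identity is obtained --- the paper cites the $\vartheta_j$ characterization from Meurant--Tebbens, whereas you use the equivalent Givens-rotation relations (the paper in fact notes this equivalence when discussing \cref{eqn:residuals}).
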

The theorem asserts that, while the FOM residual norm at any given iteration $k$ can be arbitrarily large, the overall convergence of FOM is at most $\sqrt{k+1}$ worse than that of GMRES. 
In light of the optimality of the GMRES iterates \cref{eqn:GMRES_opt}, this implies the FOM iterates are, in a certain sense, near-optimal.
While an immediate consequence of existing work, we have not been able to find this bound in the literature; see \cref{sec:past} for a discussion on past work.
As we discuss in \cref{sec:discussion}, \cref{thm:main} has implications for commonly used Krylov subspace methods for matrix functions.

In many practical situations, the GMRES residual norm is exponential in $k$, in which case the factor $\sqrt{k+1}$ is comparatively unimportant.
In \cref{fig:result}, we show the examples from \cref{fig:motivation} along with the bound from \cref{thm:main}.
To emphasize the ``overall convergence'' of FOM, we also display the smallest residual norm of FOM seen up to a given iteration $k$: $\min_{j\leq k} \|\vec{r}_j^{\mathrm{F}}\|_2$.
As expected, this quantity tracks very closely the convergence of GMRES.

Our proof makes use of a characterization of the GMRES residual norms in terms of the FOM residual norms.
Define $\vartheta_{1}=1$ and for $k\geq 1$ define
\begin{equation}
\vartheta_{ k+1}=-\frac{1}{h_{k+1, k}} \sum_{j=1}^k \vartheta_{ j} h_{j, k},
\end{equation}
where $h_{i,j}$ is the $(i,j)$ entry of $\vec{H}_{k+1,k}$.
The FOM and GMRES residuals norms are related in the following sense:
\begin{theorem}[Theorem 3.12 in \cite{meurant_tebbens_20}]\label{thm:MT3.12}
For every $k\geq 0$,
\begin{equation*}
    \frac{\|\vec{r}_k^{\mathrm{F}}\|_2}{\|\vec{r}_0^{\mathrm{F}}\|_2}
    = \frac{1}{|\vartheta_{k+1}|},
    \qquad
    \frac{\|\vec{r}_k^{\mathrm{G}}\|_2}{\|\vec{r}_0^{\mathrm{G}}\|_2}
    = {\Bigg(\sum_{j=1}^{k+1} |\vartheta_{j}|^2 \Bigg)^{-1/2}}.    
\end{equation*}
\end{theorem}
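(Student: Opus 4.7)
The plan is to identify the coordinate vector $\vec{\vartheta}_{k+1} := (\vartheta_1, \ldots, \vartheta_{k+1})^\T$ as a spanning vector of the (generically one-dimensional) left null space of $\vec{H}_{k+1,k}$, and then read off both residual norms from this identification. The case $k=0$ is trivial, so I focus on $k \geq 1$.

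First I would observe that the defining recurrence is simply a rearrangement of $\vec{\vartheta}_{k+1}^\T \vec{H}_{k+1,k} = \vec{0}^\T$, column by column. Assuming no Arnoldi breakdown, $\vec{H}_{k+1,k}$ has full column rank $k$, so its left null space is one-dimensional and $\vec{\vartheta}_{k+1}$ is the unique element of this null space with first coordinate equal to $1$. Using the block form $\vec{H}_{k+1,k} = \begin{pmatrix}\vec{H}_k \\ h_{k+1,k}\vec{e}_k^\T\end{pmatrix}$, any left null vector $(\vec{v}^\T,\alpha)$ satisfies $\vec{v}^\T = -\alpha\,h_{k+1,k}\,\vec{e}_k^\T \vec{H}_k^{-1}$. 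Setting $\alpha = \vartheta_{k+1}$ and reading off the first coordinate $\vartheta_1 = 1$ then yields the key identity
$$\vartheta_{k+1}\,h_{k+1,k}\,[\vec{H}_k^{-1}]_{k,1} = -1.$$

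For FOM, I would plug $\vec{x}_k^{\mathrm{F}}$ into its residual and invoke the Arnoldi recurrence to get $\vec{r}_k^{\mathrm{F}} = \|\vec{b}\|_2\bigl(\vec{q}_1 - \vec{Q}_{k+1}\vec{H}_{k+1,k}\vec{H}_k^{-1}\vec{e}_1\bigr)$. The same block form gives $\vec{H}_{k+1,k}\vec{H}_k^{-1}\vec{e}_1 = \vec{e}_1 + h_{k+1,k}[\vec{H}_k^{-1}]_{k,1}\,\vec{e}_{k+1}$, so after the $\vec{q}_1$ terms cancel only the last component survives:
$$\vec{r}_k^{\mathrm{F}} = -\|\vec{b}\|_2\,h_{k+1,k}\,[\vec{H}_k^{-1}]_{k,1}\,\vec{q}_{k+1}.$$
Taking norms and applying the key identity yields $\|\vec{r}_k^{\mathrm{F}}\|_2 = \|\vec{b}\|_2/|\vartheta_{k+1}|$, and the first claim follows after dividing by $\|\vec{r}_0^{\mathrm{F}}\|_2 = \|\vec{b}\|_2$.

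For GMRES, I would rewrite the residual as $\vec{r}_k^{\mathrm{G}} = \|\vec{b}\|_2\,\vec{Q}_{k+1}\bigl(I - \vec{H}_{k+1,k}\vec{H}_{k+1,k}^\dagger\bigr)\vec{e}_1$. The parenthesized matrix is precisely the orthogonal projector onto the left null space of $\vec{H}_{k+1,k}$, which is spanned by $\vec{\vartheta}_{k+1}$, so the projector equals $\vec{\vartheta}_{k+1}\vec{\vartheta}_{k+1}^\T/\|\vec{\vartheta}_{k+1}\|_2^2$. Its action on $\vec{e}_1$ is therefore $\vartheta_1\,\vec{\vartheta}_{k+1}/\|\vec{\vartheta}_{k+1}\|_2^2 = \vec{\vartheta}_{k+1}/\|\vec{\vartheta}_{k+1}\|_2^2$. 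Taking norms (and using that $\vec{Q}_{k+1}$ has orthonormal columns) gives $\|\vec{r}_k^{\mathrm{G}}\|_2 = \|\vec{b}\|_2/\|\vec{\vartheta}_{k+1}\|_2$, which is the second claim after dividing by $\|\vec{r}_0^{\mathrm{G}}\|_2 = \|\vec{b}\|_2$. The main obstacle is step one: correctly recognizing that the $\vartheta$-recurrence is nothing but a triangular algorithm for the unique left null vector of $\vec{H}_{k+1,k}$ normalized so that its first coordinate equals $1$. Once that interpretation is in hand, both formulas follow from short block-matrix computations. Degeneracies (a singular $\vec{H}_k$ or Arnoldi breakdown) can either be excluded by hypothesis or handled by a continuity argument.
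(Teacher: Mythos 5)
The paper does not prove \cref{thm:MT3.12}; it simply cites it as Theorem~3.12 of \cite{meurant_tebbens_20}, so there is no in-paper argument to compare yours against. Judged on its own, your proof is correct and complete. The observation that the recurrence for $\vartheta_{k+1}$ is exactly back-substitution for the left null vector of $\vec{H}_{k+1,k}$ (normalized to have first entry $1$) is the right structural insight, and from there both computations are sound: the block factorization $\vec{H}_{k+1,k}\vec{H}_k^{-1}\vec{e}_1 = \vec{e}_1 + h_{k+1,k}[\vec{H}_k^{-1}]_{k,1}\vec{e}_{k+1}$ together with the identity $\vartheta_{k+1}h_{k+1,k}[\vec{H}_k^{-1}]_{k,1}=-1$ gives the FOM formula, and writing $\vec{I}-\vec{H}_{k+1,k}\vec{H}_{k+1,k}^\dagger$ as the rank-one projector $\vec{\vartheta}_{k+1}\vec{\vartheta}_{k+1}^\T/\|\vec{\vartheta}_{k+1}\|_2^2$ gives the GMRES formula immediately; both then divide by $\|\vec{r}_0\|_2=\|\vec{b}\|_2$. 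This null-vector route is arguably more streamlined than the textbook derivations that proceed through the Givens-rotation QR factorization of $\vec{H}_{k+1,k}$ and track products of sines and cosines: you obtain both identities from a single linear-algebraic fact rather than from two separate calculations. Two minor points: to assert that \emph{every} column of $\vec{H}_{k+1,k}$ is annihilated by $\vec{\vartheta}_{k+1}^\T$, you should explicitly invoke that $\vec{H}_{k+1,k}$ is upper Hessenberg (so $h_{j,m}=0$ for $j>m+1$) when summing the recurrence over $m=1,\ldots,k$; and in the complex case the projector should use the conjugate transpose, though this is harmless here since the Arnoldi data are real.
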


The proof of \cref{thm:main} is an immediate consequence of \cref{thm:MT3.12}.

{\em Proof of \cref{thm:main}.}
Note that $\vec{H}_k$ is singular if and only if $\vartheta_{k+1} = 0$.
In this case, the FOM iterate is undefined and we can define the FOM residual norm to be infinite.
Therefore, defining $1/\infty = 0$, we have that
\begin{equation}
\frac{\|\vec{r}_k^{\mathrm{G}}\|_2}{\|\vec{r}_0^{\mathrm{G}}\|_2}
= \Bigg( \sum_{j=0}^{k} \frac{\|\vec{r}_0^{\mathrm{F}}\|_2^2} {\|\vec{r}_{j}^{\mathrm{F}}\|_2^2} \Bigg)^{-1/2}.
\end{equation}
Canceling $\|\vec{r}_0^{\mathbf{F}}\|_2 = \|\vec{r}_0^{\mathbf{G}}\|_2$ we therefore find
\begin{equation}\label{eqn:rkG}
\|\vec{r}_k^{\mathrm{G}}\|_2
= \Bigg( \sum_{j=0}^{k} \frac{1}{\|\vec{r}_{j}^{\mathrm{F}}\|_2^2} \Bigg)^{-1/2}.
\end{equation}
Thus, bounding each term in the sum by the maximum,
\begin{equation}
\|\vec{r}_k^{\mathrm{G}}\|_2
\geq \Bigg( (k+1) \max_{0\leq j\leq k} \frac{1}{\|\vec{r}_j^{\mathrm{F}}\|_2^2} \Bigg)^{-1/2}
= \frac{1}{\sqrt{k+1}} \cdot \min _{0\leq j \leq k} \|\vec{r}_j^{\mathrm{F}}\|_2,
\end{equation}
which proves the result.
\endproofhere

One may wonder whether the pre-factor $\sqrt{k+1}$ in \cref{thm:main} is necessary. 
We show that no better value is possible:
\begin{theorem}
    For every $k\geq 1$ there exists a matrix $\vec{A}$ and vector $\vec{b}$ for which 
    \begin{equation*}
    \min_{j\leq k} \|\vec{r}_j^{\mathrm{F}} \|_2 
    = 
    \sqrt{k+1} \cdot \|\vec{r}_k^{\mathrm{G}}\|_2.
    \end{equation*}
\end{theorem}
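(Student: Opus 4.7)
The plan is to construct an explicit pair $(\vec{A},\vec{b})$ for which $\|\vec{r}_j^{\mathrm{F}}\|_2 = \|\vec{b}\|_2$ for every $j=0,1,\ldots,k$; once such an example is in hand, \cref{eqn:rkG} immediately yields $\|\vec{r}_k^{\mathrm{G}}\|_2 = \|\vec{b}\|_2/\sqrt{k+1}$, and hence $\min_{j\leq k}\|\vec{r}_j^{\mathrm{F}}\|_2 = \sqrt{k+1}\,\|\vec{r}_k^{\mathrm{G}}\|_2$. This is in fact stronger than the stated inequality (it attains the bound with $\varepsilon = 0$). Aiming for constancy of the FOM residual norms is natural because the proof of \cref{thm:main} uses exactly the inequality $\sum_{j=0}^k \|\vec{r}_j^{\mathrm{F}}\|_2^{-2} \leq (k+1)\max_j \|\vec{r}_j^{\mathrm{F}}\|_2^{-2}$, and this is tight precisely when all the FOM residual norms agree.

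By \cref{thm:MT3.12}, constancy of the FOM residual norms is equivalent to $|\vartheta_j|=1$ for $j=1,\ldots,k+1$. I would engineer this through the Hessenberg matrix with $h_{j,j}=h_{j+1,j}=1$ and all other entries zero: the recurrence defining $\vartheta_{j+1}$ collapses to $\vartheta_{j+1}=-\vartheta_j$, giving $\vartheta_j=(-1)^{j-1}$ and so $|\vartheta_j|=1$. To realize this matrix as the Arnoldi output, the natural choice is to take $\vec{A}$ to be the $(k+1)\times(k+1)$ lower bidiagonal matrix with ones on the diagonal and subdiagonal, and $\vec{b}=\vec{e}_1$. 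A short induction then shows $\mathcal{K}_j(\vec{A},\vec{b}) = \operatorname{span}\{\vec{e}_1,\ldots,\vec{e}_j\}$, so $\vec{Q}_j = [\vec{e}_1,\ldots,\vec{e}_j]$ and the Arnoldi coefficients $h_{i,j} = \vec{e}_i^\T \vec{A}\vec{e}_j$ come out as designed.

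The remaining steps are routine: substituting the $\vartheta_j$ into \cref{thm:MT3.12} gives $\|\vec{r}_j^{\mathrm{F}}\|_2 = \|\vec{b}\|_2$ for $0 \leq j \leq k$ and $\|\vec{r}_k^{\mathrm{G}}\|_2 = \|\vec{b}\|_2/\sqrt{k+1}$, which is the desired conclusion. (If one prefers to avoid invoking \cref{thm:MT3.12}, a direct computation using $\vec{H}_k^{-1}\vec{e}_1 = \sum_{j=1}^k (-1)^{j-1}\vec{e}_j$ yields $\vec{r}_j^{\mathrm{F}} = (-1)^j \vec{e}_{j+1}$, which has unit norm.) I do not expect any real obstacle; the only hypothesis requiring (trivial) verification is that the Arnoldi process does not break down before iteration $k$, which is immediate from $h_{j+1,j}=1\neq 0$.
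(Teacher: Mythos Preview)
Your proof is correct and in fact establishes something slightly stronger than the stated theorem: your explicit example attains the bound with equality, so the $\varepsilon$ in the statement is unnecessary.

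The paper takes a different route. Rather than building $(\vec{A},\vec{b})$ by hand, it invokes the known fact that any prescribed sequence of FOM residual norms is attainable, chooses $\|\vec{r}_j^{\mathrm{F}}\|_2^2=\alpha^j$ with $\alpha\in(0,1)$, computes $\|\vec{r}_k^{\mathrm{G}}\|_2^2 = \frac{\alpha-1}{\alpha^{k+1}-1}\,\|\vec{r}_k^{\mathrm{F}}\|_2^2$ from \cref{eqn:rkG}, and lets $\alpha\to 1$. Both arguments hinge on the same idea---the inequality in the proof of \cref{thm:main} is tight precisely when all FOM residual norms coincide---but the paper approaches that regime as a limit, while you land on it directly. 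Your construction is more self-contained (no black-box existence result, no limiting argument) and sharpens the conclusion; the paper's version is marginally quicker to state once the cited result is granted, and avoids verifying the Arnoldi recurrence explicitly.
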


\begin{proof}
It is well-known that any sequence of residual norms is possible for FOM; that is, given any sequence of non-zero positive numbers $f_0,f_1, \ldots, f_k$, there exists a matrix $\vec{A}$ and vector $\vec{b}$ (of dimension $n=k+1$) for which $\|\vec{r}_j^{\mathrm{F}}\|_2 = f_j$ \cite[Theorem 3.15]{meurant_tebbens_20}.
In particular, there exists a matrix $\vec{A}$ and vector $\vec{b}$ for which $\|\vec{r}_j^{\mathrm{F}}\|_2^2 = 1$ for $j=0,1, \ldots, n$.
In this case, \cref{eqn:rkG} implies 
\[
\|\vec{r}_k^{\mathrm{G}}\|_2^2
= \frac{1}{\sum_{j=0}^{k} 1/\|\vec{r}_{j}^{\mathrm{F}}\|_2^2} 
= \frac{1}{k+1},
\]
which, since $\min_{0\leq j \leq k} \|\vec{r}_j^{\mathrm{F}}\|_2=1$, gives the result.
\end{proof}

\section{Discussion}\label{sec:discussion}

\subsection{Past work}\label{sec:past}

Many works have studied the relation between convergence of FOM and GMRES \cite[etc.]{brown_91,cullum_greenbaum_96,greenbaum_97,meurant_11a,meurant_11,fong_saunders_12}, particularly with the goal of understanding how to efficiently estimate the residual/error norms in practice. 

Perhaps the most well-known \cite{brown_91,cullum_greenbaum_96} relation between the FOM and GMRES residual norms is 
\begin{equation}\label{eqn:residuals}
    \|\vec{r}_k^{\mathrm{F}}\|_2
    = \frac{\| \vec{r}_{k}^{\mathrm{G}} \|_2}{\sqrt{1- \left( \| \vec{r}_{k}^{\mathrm{G}} \|_2 / \| \vec{r}_{k-1}^{\mathrm{G}} \|_2 \right)^2}},
\end{equation}
which can be obtained by rearranging \cref{eqn:rkG}.
Informally, \cref{eqn:residuals} says that at iterations where GMRES makes good progress (i.e. for which $\| \vec{r}_{k}^{\mathrm{G}} \|_2 \ll \| \vec{r}_{k-1}^{\mathrm{G}} \|_2$) the FOM residual norm is close to the GMRES residual norm, and at iterations where GMRES stagnates (i.e. for which $\| \vec{r}_{k}^{\mathrm{G}} \|_2 \approx \| \vec{r}_{k-1}^{\mathrm{G}} \|_2$) the FOM residual norm is very large.
However, it does not provide any information about at which, if any, iterations GMRES makes good progress.
As such, it is unclear from \cref{eqn:residuals} that FOM's overall convergence must track that of GMRES.

There are at least two bounds reminiscent of \cref{thm:main} in that they aim to understand the ``overall convergence'' of FOM, rather than the convergence at every iteration \cite{greenbaum_druskin_knizhnerman_99,chen_greenbaum_musco_musco_22}.
While \cite{greenbaum_druskin_knizhnerman_99} uses entirely different techniques, \cite{chen_greenbaum_musco_musco_22} uses \cref{eqn:residuals} to argue that if GMRES is making good progress, there must be some iteration for which $\| \vec{r}_{j}^{\mathrm{G}} \|_2$ is substantially smaller than $\| \vec{r}_{j-1}^{\mathrm{G}} \|_2$ and hence for which $\| \vec{r}_{k}^{\mathrm{G}} \|_2 \approx \| \vec{r}_{k-1}^{\mathrm{G}} \|_2$. 
Both bounds are weaker than \cref{thm:main} in that they (i) hold only for symmetric matrices and (ii) are not in terms of the GMRES residual norm, but rather the best approximation of zero on sets of the form $[a,b]\cup[c,b]$ by a polynomial taking value one at the origin, where $b<0<c$.

\subsection{Error norm}

We may wish to obtain a solution with small \emph{error} norm $\|\vec{A}^{-1}\vec{b} - \vec{x}\|_2$. 
It is typically not clear whether GMRES or FOM is preferable, and in many cases the error norms of FOM are smaller than those of GMRES.
In \cref{fig:error} we have illustrated the error norms for the examples from \cref{fig:motivation}, and observe that the FOM errors are slightly better.

Using that $\vec{A}^{-1}\vec{b} - \vec{x} = \vec{A}^{-1}(\vec{b} - \vec{A}\vec{x})$, \cref{thm:main} implies
\begin{equation}
\label{eqn:errornorm}
    \min_{j\leq k} \|\vec{A}^{-1}\vec{b} - \vec{x}_{j}^{\mathrm{F}}\|_2
    \leq  \sqrt{k+1} \cdot \kappa(\vec{A})\cdot  \|\vec{A}^{-1}\vec{b} - \vec{x}_k^{\mathrm{G}}\|_2.
\end{equation}
Therefore, so long as $\vec{A}$ is reasonably well-conditioned, the FOM error norms can, on the whole, never be significantly worse than those of GMRES.
The error norms of FOM and GMRES can be estimated a posteriori to determine when to stop \cite{meurant_11a}.

\begin{figure}
    \centering
    \includegraphics[scale=.48]{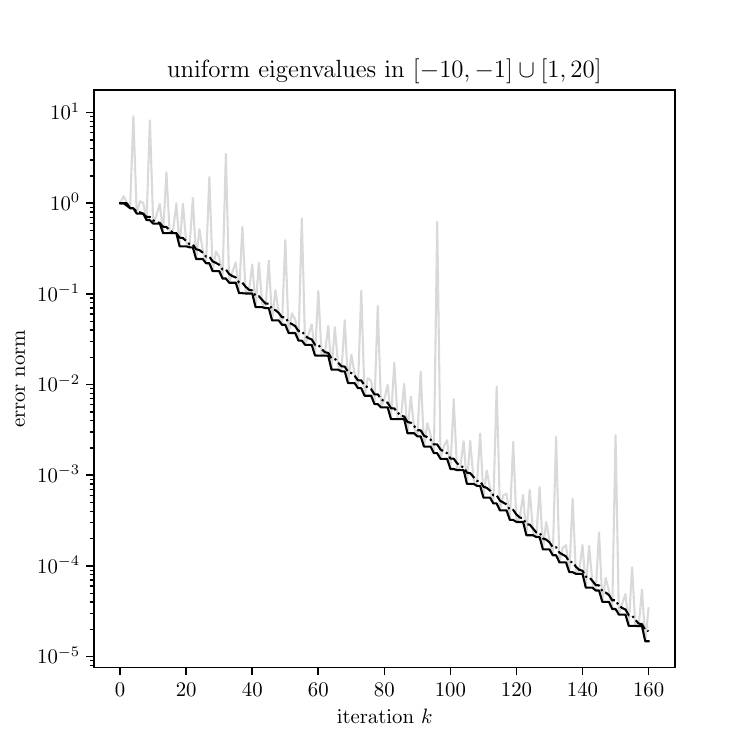}
    \includegraphics[scale=.48]{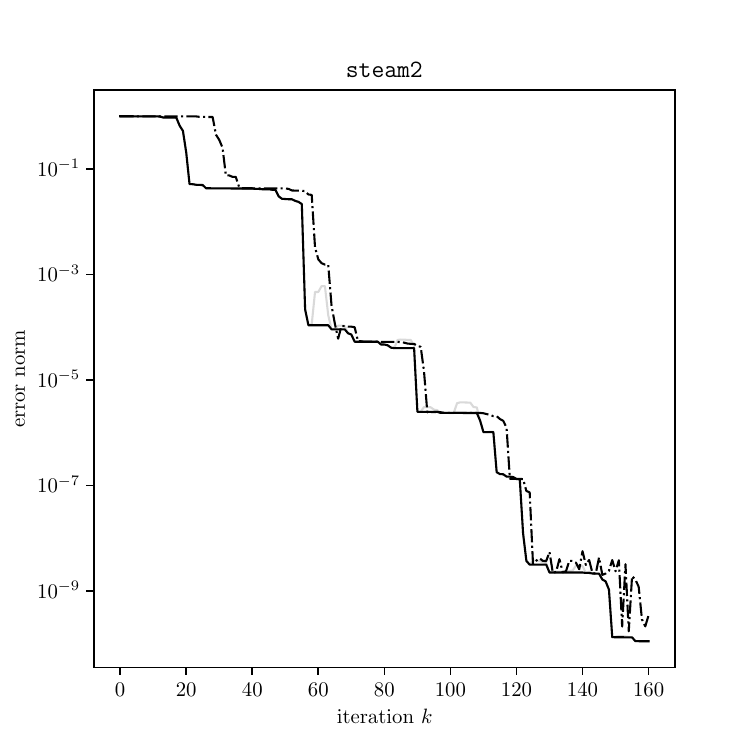}
    \caption{Best FOM error so far $\min_{j\leq k} \|\vec{A}^{-1}\vec{b} - \vec{x}_k^{\mathrm{F}}\|_2$ (solid), error for FOM $\|\vec{A}^{-1}\vec{b} - \vec{x}_k^{\mathrm{F}}\|_2$ (solid grey) and GMRES $\|\vec{A}^{-1}\vec{b} - \vec{x}_k^{\mathrm{G}}\|_2$ (dash-dot).
    All curves are normalized by $\|\vec{A}^{-1}\vec{b}\|_2$.
    Note that the FOM convergence is slightly better than GMRES, indicating FOM may be preferable to GMRES in some situations.}
    \label{fig:error}
\end{figure}

\subsection{Connection to matrix functions}\label{sec:fAb}
For convenience we will assume $\vec{A}$ is diagonalizable.
FOM is closely related to the Arnoldi method for matrix function approximation (Arnoldi-FA) \cite{druskin_knizhnerman_89,gallopoulos_saad_92,higham_08} which approximates $f(\vec{A})\vec{b}$ with iterates $\vec{Q}_k f(\vec{H}_k) \vec{e}_1$.
This is arguably the most widely used Krylov subspace method for approximating $f(\vec{A})\vec{b}$.

Commonly $f(x)$ can be expressed in the form
\begin{equation}
    \label{eqn:fintegral}
    f(x) = \int_\Gamma g(x)(x-z)^{-1} \d{x},
\end{equation}
for some weight function $g(x)$ and contour $\Gamma$. 
Common examples of functions which can be expressed in the form \cref{eqn:fintegral} include analytic functions (exponential, indicator function for a region in the complex plane) for which $\Gamma$ is a contour in the complex plane, Stieltjes functions (inverse square root, logarithm) for which $\Gamma$ is a subset of the real line, and rational functions with simple poles, which are a special case of Stieltjes functions; see for instance \cite{frommer_simoncini_08}.
Arnoldi-FA is widely used to compute the action of the corresponding matrix functions on a vector in applications throughout the computational sciences, including quantum chromodynamics, differential equations, machine learning, etc. \cite{frommer_simoncini_08,higham_08}.


When $f(x)$ can be written as \cref{eqn:fintegral} at the eigenvalues of $\vec{A}$ and $\vec{H}_k$, the Arnoldi-FA error is expressed as
\begin{equation}
f(\vec{A})\vec{b} - \vec{Q}_k f(\vec{H}_k) \vec{e}_1
= \int_\Gamma \Big[ 
(\vec{A} - z\vec{I})^{-1} \vec{b}
- \vec{Q}(\vec{H}_k - z\vec{I})^{-1} \vec{e}_1 \Big] \, \d\mu(z).
\end{equation}
Note that $\vec{Q}(\vec{H}_k - z\vec{I})^{-1}\vec{e}_1$ is the FOM approximation to the linear system $(\vec{A} - z\vec{I})\vec{x} = \vec{b}$. 
Hence, understanding the behavior of FOM is important to understanding the behavior of Arnoldi-FA.

Empirically, while Arnoldi-FA can sometimes have oscillatory convergence, it tends to follow a general downward trend approximately matching the best possible approximation from Krylov subspace \cite{amsel_chen_greenbaum_musco_musco_23}. 
In fact, we are unaware of any examples for which this is not the case.
Near-optimality guarantees have been proved in the symmetric case for the matrix exponential \cite{druskin_greenbaum_knizhnerman_98} and certain  rational functions \cite{amsel_chen_greenbaum_musco_musco_23}. 
\Cref{thm:main} is a near-optimality guarantee for Arnoldi-FA with $f(x) = 1/x$.

\section{Conclusion}

We have shown that the FOM residual norms are nearly as good as the GMRES residual norms in an overall sense. 
This provides theoretical justification for the use of FOM on linear systems of equations, as well as insight into the remarkable convergence of the Arnoldi method for matrix function approximation.

\section*{Acknowledgements}

We thank the editor and anonymous referees for their helpful feedback which improved the paper.

\bibliography{refs}
\bibliographystyle{amsplain}

\end{document}